\theoremstyle{plain}
\newtheorem{theorem}{Theorem}[section]
\newtheorem{corollary}[theorem]{Corollary}
\theoremstyle{definition}
\newtheorem{definition}[theorem]{Definition}
\newtheorem{example}[theorem]{Example}
\newtheorem{algorithm}[theorem]{Algorithm}
\theoremstyle{remark}
\newtheorem{remark}[theorem]{Remark}
\def\bn{{\mathbb N}}
\def\bz{{\mathbb Z}}
\def\Acal{{\mathcal A}}
\def\mod{\mathop{\rm mod}\nolimits}
\def\max#1{{\rm max}\,\{#1\}}
\def\ini#1{{\rm in}\,(#1)}
\def\card{{\rm card}}
\def\gbb{Gr\"obner basis\ }
\def\ul{\underline }
\title{Gr\"obner basis, a "pseudo-polynomial" algorithm for computing the  Frobenius number}
\author{
Marcel Morales\\
{\small 
Institut Fourier, Laboratoire de Math\'ematiques
associ\'e au CNRS, UMR 5582,}\\
{\small Universit\'e Joseph Fourier, B.P.74,
38402 Saint-Martin d'H\`eres cedex, France}\\ 
{\small and ESPE Université de Lyon,  France}\\
{\small{\tt marcel.morales@ujf-grenoble.fr}}\\
\and  Nguyen Thi Dung\\
{\small Thai Nguyen University of Agriculture and Forestry, 
 }\\
{\small 
Thai Nguyen,  Vietnam }\\
}
\begin{document}

\hfuzz=3pt
\maketitle
{\small \sc Abstract.}
 
 Let consider $n$ natural numbers    $a_1  ,\ldots ,  a_{n}  $. Let $S$ be the numerical semigroup generated by $a_1  ,\ldots ,  a_{n}  $. 
Set $A=K[t^{a_1}, \ldots , t^{a_n}]=K[{x_1}, \ldots , {x_n}]/I$.
 The aim of this paper is:  
\begin{enumerate}
\item Give an effective  pseudo-polynomial algorithm on $a_1$,  which computes The Apéry set and the Frobenius number of $S$.
 As a consequence it also solves in pseudo-polynomial time the  integer knapsack problem : given a natural integer b, b belongs to $S$?
\item The \gbb of $I$ for the reverse lexicographic order to $x_n,\ldots ,x_1$, without using Buchberger's algorithm. 
\item $\ini{I} $ for the reverse lexicographic order to $x_n,\ldots ,x_1$.
\item $A$ as a $K[t^{  a_1 }]$-module.

 \end{enumerate}
 We dont know the complexity of   our algorithm.  We need to solve the "multiplicative" integer knapsack problem:
 
Find all positive integer solutions $({k_1}, \ldots , {k_n})$ of the inequality $\prod_{i=2}^n (k_i+1)\leq a_1+1$.

 This algorithm is easily implemented.
 The implementation of this algorithm "frobenius-number-mm", for $n=17 $, can be downloaded in \hfill\break
https://www-fourier.ujf-grenoble.fr/~morales/frobenius-number-mm
\vskip1truecm\noindent
\section*{Introduction} 
In the sequel we shall use the following notations.
Let $K$ be a field, $\Acal$ be a set of $n$ natural numbers  $\Acal =\{{  a_1 },\ldots ,{  a_{n} }\}\subset \bn$.  
  $S$   the numerical semigroup generated by $a_1,\ldots ,a_n$, that is $S=\{k_1 a_1+\ldots k_n a_n | k_i\in \bn\}$. We consider the  one-dimensional {\it toric affine ring} $A=K[t^{a_1}, \ldots , t^{a_n}]\subset K[t]$,
 that is  $A= K[t^k | k\in S]:=K\lbrack S\rbrack$.\hfill\break 
The ring  $A=K[t^{a_1}, \ldots , t^{a_n}]\subset K[t]$ has a presentation as a quotient of the polynomial ring $K[x_1,\ldots ,x_n]$, as follows:\hfill\break 
Let $\varphi : K[x_1,\ldots ,x_n]\rightarrow K[t^{a_1}, \ldots , t^{a_n}]$ defined by 
\begin{eqnarray*}
x_1&     \mapsto    & t^{a_1}\\
&\vdots&\\
x_n&     \mapsto    & t^{a_n}\\
\end{eqnarray*}
Let $ I(S)     $ be the kernel of $\varphi $, that is the ideal 
ideal
formed by all  polynomials  of $
K[x_{1},\ldots,x_{n}]$ such that  $P(t^{a_1}, \ldots , t^{a_n})=0$.

The ideal $I(S)$ has a system of generators formed by binomials which
are
differences of two monomials with coefficient 1. Note that if we grade the polynomial ring $K[x_1,\ldots ,x_n]$ by setting $\deg(x_i)=a_i$, the
morphism $\varphi$ is homogeneous, and the ideal $I(S)$ is homogeneous.
  
  The following theorem is well known, we give here a short proof for the commodity of the reader. 
\begin{theorem} Suppose that $a_1,\ldots ,a_n$ are relatively prime numbers then any large integer belongs to $S$.
\end{theorem}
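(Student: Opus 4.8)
The plan is to use the generator $a_1$ as a modulus and reduce the statement to covering finitely many residue classes. The key claim is that for every residue $r\in\{0,1,\ldots,a_1-1\}$ there exists an element $s_r\in S$ with $s_r\equiv r\pmod{a_1}$. Granting this, I let $N$ be the largest of the numbers $s_0,\ldots,s_{a_1-1}$ and take an arbitrary integer $m\ge N$. Writing $r$ for the residue of $m$ modulo $a_1$, I have $m\equiv s_r\pmod{a_1}$ and $m\ge N\ge s_r$, so $m=s_r+k\,a_1$ for some integer $k\ge 0$. Since $s_r\in S$, $k\,a_1\in S$, and $S$ is closed under addition, I conclude $m\in S$. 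Thus every integer $\ge N$ lies in $S$, which is exactly the assertion that all large integers belong to $S$.

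It remains to prove the key claim, and this is where the coprimality hypothesis enters. I pass to the additive group $\bz/a_1\bz$ and consider the images $\bar a_2,\ldots,\bar a_n$ of $a_2,\ldots,a_n$ (the image of $a_1$ is zero, so it contributes nothing). The subgroup these images generate is the image of the subgroup $\langle a_1,a_2,\ldots,a_n\rangle\subseteq\bz$; since $\gcd(a_1,\ldots,a_n)=1$, that subgroup is all of $\bz$, and hence its image is all of $\bz/a_1\bz$. Therefore any residue $r$ can be written $r\equiv\sum_{i=2}^n c_i a_i\pmod{a_1}$ with $c_i\in\bz$. The only subtlety is that these $c_i$ need not be nonnegative, so the combination is not yet an element of $S$. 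To repair this I replace each $c_i$ by $c_i+k_i a_1$ with $k_i\ge 0$ chosen large enough that the coefficient becomes nonnegative; this changes the sum only by $a_1\sum_{i}k_i a_i\equiv 0\pmod{a_1}$, so the residue class is preserved. The resulting combination $s_r=\sum_{i=2}^n (c_i+k_i a_1)a_i$ has nonnegative coefficients, hence lies in $S$, and still satisfies $s_r\equiv r\pmod{a_1}$, establishing the claim.

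The only genuinely nontrivial step is the passage from the integer-combination statement (surjectivity onto $\bz/a_1\bz$) to a bona fide nonnegative-combination statement: coprimality is used precisely to guarantee that $S$ meets every residue class modulo $a_1$, and nonnegativity is then obtained for free by adding large multiples of $a_1$. An alternative route, which I expect to be slightly more laborious, is induction on $n$, reducing to the two-generator Sylvester case by setting $d=\gcd(a_1,\ldots,a_{n-1})$ and using $\gcd(d,a_n)=1$. I would favor the residue-class argument above, since it produces the bound $N$ explicitly and connects directly with the Ap\'ery set computed later in the paper.
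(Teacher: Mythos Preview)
Your proof is correct. The argument via residue classes modulo $a_1$ is complete: coprimality gives surjectivity of $\langle a_2,\ldots,a_n\rangle$ onto $\bz/a_1\bz$, the shift $c_i\mapsto c_i+k_ia_1$ legitimately converts integer combinations into nonnegative ones without changing the residue, and the final step $m=s_r+ka_1$ with $k\ge 0$ is unproblematic.

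The paper, however, argues differently. It treats $n=2$ directly via B\'ezout: from $s_1a_1+s_2a_2=1$ with $s_1>0$, $s_2<0$, it writes a large $k$ as $qa_2+r$ and substitutes to get $k=rs_1a_1+(q+rs_2)a_2$, the second coefficient being positive once $k$ is large. For $n>2$ the paper simply asserts that ``a similar argument works''. Your approach is in fact the cleaner one for general $n$: it avoids the induction (or multi-term B\'ezout) hidden in that assertion, produces an explicit threshold $N=\max_r s_r$, and, as you note, is exactly the construction of representatives that underlies the Ap\'ery set used throughout the rest of the paper. The paper's route has the virtue of being utterly elementary in the two-variable case and of making the Sylvester formula $(a_1-1)(a_2-1)-1$ almost visible, but it leaves the general case as an exercise that your argument actually carries out.
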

\begin{proof} Suppose that $n=2$, By Bézout'theorem there exist relative integer numbers $s_1,s_2$ such that $s_1a_1+s_2a_2=1$. We can assume 
 that $s_1>0 , s_2<0$. Let $k>0$ big enough we can write $k=qa_2+r$ with $0\leq r<a_2$, which implies $k=qa_2+r(s_1a_1+s_2a_2)=rs_1a_1+(q+rs_2)a_2$.
 Since $k$ is
 large enough $(q+rs_2)>0$, hence $k\in S$. 

A similar argument works for $n>2$. 
\end{proof}
\begin{definition} Suppose that $a_1,\ldots ,a_n$ are relatively prime numbers, the biggest integer number in $\bn\setminus  S$ 
is called the Frobenius number, we denote it by $g(S).$ More generally if $\gcd(a_1,\ldots ,a_n)=\lambda $ then the biggest integer in $\lambda \bn\setminus  S$ 
is called the Frobenius number, we denote it by $g(S).$
\end{definition}
Suppose that   $\gcd(a_1,\ldots ,a_n)=\lambda $, let $\widetilde S$  be the semigroup generated by $\frac{a_1}{\lambda },\ldots ,\frac{a_n}{\lambda }$. 
We have that $g(S)=\lambda g(\widetilde S).$
The problem of computing the Frobenius number is open since the end of 19th Century, for $n=2$ there is a formula (see section 1), for $n=3$ a formula using
Euclide's algorithm for gcd was given in \cite{rod}. In 1987, in \cite{Mo1} the first author translate for the first time the Frobenius problem into an algebraic setting,
showing that the Frobenius number  is the degree of the Hilbert-Poincaré series written as a rational fraction, moreover By using \cite{rod}, 
in the case $n=3$ the Hilbert-Poincaré series is completely described by an   algorithm using only Euclide's 
algorithm for gcd, that is of complexity $\ln(a)$. An implementation in Pascal was done 
by the first author to compute a system of generators of the affine  monomial curve $K[t^{a}, t^{b}, t^{c}]$ and its projective closure,
 which computed the Frobenius number for three natural numbers.
 In recent works \cite{ein} and \cite{rou}, the  computation of Frobenius number, is related to the computation of the Hilbert-Poincaré series. More precisely,
 in
 \cite{rou} the author deduces the Frobenius number from a \gbb of the ideal $I(S)$. 
We recall that the computation of a \gbb is double exponential complexity by using the Buchberger algorithm.

In this paper we study the Frobenius problem from algebraic point of view, this allows us to give a conceptual frame to our algorithm. 
As a consequence of our study we get a very fast and simple algorithm to compute the Frobenius number, determine completely the semigroup $S$
 and solve the knapsack integer problem, that is to decide if an integer number belong to $S$.
We have implemented for $n\leq 17$.
We need to solve the "multiplicative" integer knapsack problem:
 
Find all positive integer solutions $({k_1}, \ldots , {k_n})$ of the inequality $\prod_{i=2}^n (k_i+1)\leq a_1+1$.

 Moreover let $G(S)$ be a \gbb for the degree reverse lexicographic order to $x_n,\ldots ,x_1$.
$\prec _{revlex}$ and   $in(I(S))$ be its initial ideal. 
As an extension we find several algorithms :
\begin{enumerate}
\item The set of monomials $\{x_2^{k_2}\ldots x_n^{k_n}\notin in(I(S))\}$.
\item The system of monomial generators of $\ini{I} $. 
\item The \gbb $G(S)$ of $I$, without using Buchberger's algorithm. 
 \end{enumerate}

They are extensions of the previous work and algorithm by the first author in  \cite{Mo1}, \cite{Mo2}. 
The algorithm presented here  is implemented and can be downloaded in
https://www-fourier.ujf-grenoble.fr/~morales/. Note that because of the limitation of the Compiler for the moment the software works only for numbers less
than 10000, but an implementation in Mathematica should allow to compute with any number of digits.

In the first section we introduce  the Apéry set and we prove some known results. 

In the second section we present the connection between  Hilbert-Poincaré series and the Frobenius number. 
This connection was established by the first author for the first time in \cite{Mo1}.
 
 In the third section we introduce Noether normalization and we prove the connection between Apéry sets and Noether normalization.
 
In the last section we develop our algorithm.

In our work in progress, we will extend the above algorithm to compute \gbb of any simplicial monomial ideal. 
\section{Frobenius number, Apéry set }
For a survey on the Frobenius number we refer to \cite{ra}.
\begin{definition}Suppose that $a_1$ is the smallest among $a_1,\ldots ,a_n$. 
The Apéry set ${\rm Ap}(S,a_1)$ of the semigroup $S$ with respect to $a_1$ 
 is the set 
${\rm Ap}(S,a_1):=\{ s\in S | s-a_1 \notin S\}.$
\begin{remark} The definition of Apéry set  makes sense even if the numbers $a_1,\ldots ,a_n$ are not relatively prime numbers. 
Suppose that   $\gcd(a_1,\ldots ,a_n)=\lambda $, let $\widetilde S$  be the semigroup generated by $\frac{a_1}{\lambda },\ldots ,\frac{a_n}{\lambda }$. 
We have that ${\rm Ap}(S,a_1) $ is obtained from ${\rm Ap}(\widetilde S,\frac{a_1}{\lambda }) $ by multiplication by $\lambda $.
\end{remark} 
 \begin{theorem} (Apéry \cite{ap})Suppose that $a_1,\ldots ,a_n$ are natural numbers such that $\gcd(a_1,\ldots ,a_n)=\lambda $. 
 \begin{enumerate}
 \item ${\rm Ap}(S,a_1):=\{ \lambda w_0,\ldots ,\lambda w_{{\frac{a_1}{\lambda }-1}}\},$ where $w_i $ is the smallest element is $\widetilde S$ congruent to
$i$ mod $\frac{a_1}{\lambda }$.
\item $g(S) = \max {s-a_1 | s\in {\rm Ap}(S,a_1) }$.
\end{enumerate}
\end{theorem}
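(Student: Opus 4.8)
The plan is to first reduce to the coprime case and then extract a clean membership test for $S$, carried out one residue class at a time modulo $a_1$. By the Remark preceding the statement, ${\rm Ap}(S,a_1)$ is the $\lambda$-dilate of ${\rm Ap}(\widetilde S,\frac{a_1}{\lambda})$, and, as observed in the text, $g(S)=\lambda g(\widetilde S)$; since $\widetilde S$ is generated by coprime numbers, it suffices to prove both parts under the assumption $\gcd(a_1,\ldots,a_n)=1$ and then multiply through by $\lambda$ at the end. So I would set $\lambda=1$, write $w_i$ for the smallest element of $S$ congruent to $i$ modulo $a_1$, and first check that $w_i$ is well defined for every $i\in\{0,\ldots,a_1-1\}$: by the opening theorem of the paper (with coprime generators every sufficiently large integer lies in $S$), each residue class modulo $a_1$ contains elements of $S$, so the minimum defining $w_i$ exists.

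For part (1) I would show that ${\rm Ap}(S,a_1)=\{w_0,\ldots,w_{a_1-1}\}$ by proving that the Ap\'ery set meets each residue class modulo $a_1$ in exactly the single point $w_i$. On one hand $w_i\in{\rm Ap}(S,a_1)$, since if $w_i-a_1$ lay in $S$ it would be a strictly smaller element of $S$ in the class of $i$, contradicting minimality. On the other hand, if $s\in S$ lies in the class of $i$ with $s\neq w_i$, then $s-w_i$ is a positive multiple of $a_1$, so $s\geq w_i+a_1$ and $s-a_1=w_i+(s-w_i-a_1)\in S$, whence $s\notin{\rm Ap}(S,a_1)$. Thus $w_i$ is the unique Ap\'ery element in its class, which yields the stated description and, as a byproduct, $|{\rm Ap}(S,a_1)|=a_1$.

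The engine for part (2) is the membership criterion: for a nonnegative integer $m$ in the class $i=m\bmod a_1$, one has $m\in S$ if and only if $m\geq w_i$. Indeed $m\geq w_i$ forces $m=w_i+ka_1$ with $k\geq 0$, so $m\in S$; and $m\in S$ forces $m\geq w_i$ by minimality of $w_i$. Hence the largest nonnegative integer in class $i$ lying outside $S$ is $w_i-a_1$ (with no such integer when $w_i<a_1$, a case in which $w_i-a_1<0$ contributes nothing). Taking the maximum over all classes gives $g(S)=\max\{w_i-a_1 : 0\leq i\leq a_1-1\}=\max\{s-a_1 : s\in{\rm Ap}(S,a_1)\}$. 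Undoing the reduction by scaling, part (1) becomes ${\rm Ap}(S,a_1)=\{\lambda w_0,\ldots,\lambda w_{a_1/\lambda-1}\}$, and part (2) becomes $g(S)=\lambda g(\widetilde S)=\lambda\max\{s'-\frac{a_1}{\lambda}\}=\max\{\lambda s'-a_1\}=\max\{s-a_1 : s\in{\rm Ap}(S,a_1)\}$.

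I do not anticipate a serious obstacle; the only point needing care in part (2) is the edge behaviour for classes whose minimal element $w_i$ is smaller than $a_1$ (for example the class of $0$, where $w_0=0$), since there $w_i-a_1$ is negative and one must verify it never spoils the maximum. The other mild subtlety is the appeal to the opening theorem to ensure that every residue class modulo $a_1$ actually meets $S$, without which the $w_i$ would fail to be defined.
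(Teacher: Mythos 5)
Your proof is correct and follows essentially the same route as the paper's: reduce to the coprime case, show each residue class modulo $a_1$ contains exactly one Ap\'ery element (namely $w_i$), and deduce part (2) from the fact that membership in $S$ within the class of $i$ is equivalent to being at least $w_i$. The only differences are cosmetic refinements--you make explicit the well-definedness of the $w_i$, the edge case $w_i<a_1$, and the final rescaling by $\lambda$, points the paper leaves implicit.
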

\begin{proof} \begin{enumerate}
 \item We can assume that $a_1,\ldots ,a_n$ are  relatively prime numbers. \hfill\break 
First we prove that for all $i=0,\ldots ,{a_1}-1$, $w_i$ belongs to ${\rm Ap}(S,a_1)$. Suppose that it is not true, that is $w_i-a_1\in S$ for some 
$i=0,\ldots ,{a_1}-1$.
It follows that $w_i-a_1<w_i$ and both $w_i-a_1, w_i \in S$ are   congruent to
$i$ mod $a_1$. This is a contradiction with the definition of $w_i$.
As a consequence ${\rm Ap}(S,a_1)$ has at least $a_1$ elements in order to prove the claim it will be enough to show that 
${\rm Ap}(S,a_1)$ has exactly $a_1$ elements. Suppose that $\card({\rm Ap}(S,a_1))>a_1$, then there exists two elements $s_1<s_2$ in  ${\rm Ap}(S,a_1)$
such that both $s_1<s_2$ are   congruent to
$i$ mod $a_1$ for some $i=0,\ldots ,{a_1}-1$, that is $s_2=s_1+ka_1$ with 
$k>0$ a natural integer, hence $s_2\not\in {\rm Ap}(S,a_1)$, a contradiction. 
\item  Let $h\in \bn$ such that $h>\max {s-a_1 | s\in {\rm Ap}(S,a_1) }$, since $h$ is  congruent to
$i$ mod $a_1$ for some $i=0,\ldots ,{a_1}-1$, we can write $h=w_i+\alpha a_1$, with $\alpha \in \bz$, hence
$h=(w_i-a_1)+(\alpha+1) a_1$, since $h>(w_i-a_1)$ we have $(\alpha+1)>0$, hence $ \alpha\geq 0$, which implies that $h\in S$.

\end{enumerate}
\end{proof}

\begin{corollary} Let $n=2$ suppose that  $a_1,a_2$ are relatively prime numbers then $g(S)= (a_1-1)(a_2-1)-1$.
\end{corollary}
\begin{proof}We give a combinatorial proof using Apéry sets. Since $a_1,a_2$ are relatively prime numbers, we have  
$${\rm Ap}(S,a_1):=\{ 0, a_2,\ldots , ({a_1}-1)a_2\},$$ hence $g(S)= (a_1-1)(a_2)-a_1=(a_1-1)(a_2-1)-1$

\end{proof}
For $n=2$, we will give an algebraic proof later.
\begin{corollary} For $i=0,\ldots ,{a_1}-1$ let $S_i = \{s\in S| s\equiv i \mod a_1\}$. Then $S$ is the disjoint union of $S_0,\ldots S_{a_1-1}$.
\end{corollary}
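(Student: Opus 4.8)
The plan is to treat the decomposition as nothing more than sorting the elements of $S$ by their remainder on division by $a_1$, so that the statement becomes a direct consequence of the uniqueness of Euclidean division rather than of any special feature of the semigroup. First I would establish disjointness: if an element $s$ belonged to both $S_i$ and $S_j$, then $s\equiv i$ and $s\equiv j \pmod{a_1}$, whence $i\equiv j \pmod{a_1}$; since $0\le i,j\le a_1-1$ this forces $i=j$. Thus the sets $S_0,\ldots,S_{a_1-1}$ are pairwise disjoint.

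Next I would show that they cover $S$. Given $s\in S$, Euclidean division by $a_1$ yields a unique remainder $i$ with $0\le i\le a_1-1$ and $s\equiv i\pmod{a_1}$, so $s\in S_i$. Hence $S=\bigcup_{i=0}^{a_1-1}S_i$, and together with disjointness this gives $S=\bigsqcup_{i=0}^{a_1-1}S_i$, as claimed.

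I do not expect any genuine obstacle: the whole content is the remark that grouping $S$ by residue class modulo $a_1$ is a partition. It is worth recording, however, the sharper picture supplied by the Ap\'ery theorem just proved. In the relatively prime case each class is nonempty, its least element being exactly the Ap\'ery generator $w_i$, and one checks immediately that $S_i=\{w_i+ka_1 : k\in\bn\}$, since any $s\in S_i$ satisfies $s\ge w_i$ and $s-w_i$ is a nonnegative multiple of $a_1$. This refinement is the form of the decomposition that will feed into the description of $A$ as a $K[t^{a_1}]$-module, although it is not needed for the bare disjoint-union statement above.
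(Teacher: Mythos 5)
Your proof is correct: the paper states this corollary without any proof at all, and your argument --- disjointness from the uniqueness of the remainder, coverage from Euclidean division --- is precisely the immediate justification it takes for granted. Your closing refinement, that in the relatively prime case $S_i=\{w_i+ka_1 : k\in\bn\}$ with $w_i$ the Ap\'ery element, is also correct and accurately records how this decomposition is used later (in the description of $A$ as a free $K[t^{a_1}]$-module).
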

\section{Frobenius number and Hilbert-Poincaré series }
Let $R:=K[x_{1},\ldots,x_{n}]$ be the polynomial ring graded by the weights  $\deg x_1=a_1,\ldots ,\deg x_n=a_n$, and 
$I\subset K[x_{1},\ldots,x_{n}]$  be a graded ideal. Let $B=R/I$,
 the Hilbert-function of  $B$ is  defined by $H_B(l)=\dim_K B_l ,$  for all $l \in \bz$, and the Hilbert-Poincaré series of $B$:
 $$P_B(u)=\sum_{l\in \bz}H_B(l)u^l. $$ 
We recall the following Theorem from \cite{Mo1}
\begin{theorem}\label{frob-degpoincare} Let $R:=K[x_{1},\ldots,x_{n}]$ be the polynomial ring graded by the weights  $\deg x_1=a_1,\ldots ,\deg x_n=a_n$,  
$I\subset K[x_{1},\ldots,x_{n}]$  be a graded ideal and $B:=R/I$. Then 
\begin{enumerate}
 \item The Hilbert-Poincaré series of $B$ is a rational function:
$$P_B(u)=\frac{Q_B(u)}{(1-u^{a_1}) (1-u^{a_2})\ldots  (1-u^{a_n})}\ \ ,$$
where $Q_B(u)$ is a  polynomial on $u$. 
\item There exists $h$ polynomials with integer coefficients $\Phi _{H_B,0}(l),\ldots , \Phi _{H_B,h}(l) $ such that  
 $H_B(lh+i)=\Phi _{H_B,i}(l)  $   for $0\leq i\leq h-1$ and $l$ large enough.
  We recall that the index of regularity of the Hilbert function is the biggest integer $l $ such that 
 $H_B(l)\not=\Phi _{H_B,i}(l) ,$ for any $i$. 
\item The index of regularity of the Hilbert function equals the degree of the rational fraction defining the Poincaré series.
 \end{enumerate}
\end{theorem}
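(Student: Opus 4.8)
The plan is to reduce everything to the single basic computation of the Hilbert--Poincar\'e series of the weighted polynomial ring $R$ itself, and then to read off the asymptotics of $H_B$ from a partial-fraction decomposition.

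First I would establish (1). The monomials $x_1^{k_1}\cdots x_n^{k_n}$ form a $K$-basis of $R$, and $x_1^{k_1}\cdots x_n^{k_n}$ has degree $k_1a_1+\cdots+k_na_n$; hence
$$P_R(u)=\sum_{k_1,\ldots,k_n\ge 0} u^{k_1a_1+\cdots+k_na_n}=\prod_{i=1}^n\frac{1}{1-u^{a_i}},$$
and the shifted module $R(-b)$ has series $u^b P_R(u)$. Since $R$ is Noetherian graded and $B=R/I$ is a finitely generated graded module, the graded form of Hilbert's syzygy theorem yields a finite graded free resolution
$$0\to F_p\to\cdots\to F_1\to F_0\to B\to 0,\qquad F_j=\bigoplus_k R(-b_{jk}),$$
with $p\le n$. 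As the Hilbert--Poincar\'e series is additive on short exact sequences, the alternating sum gives
$$P_B(u)=\sum_{j=0}^p(-1)^jP_{F_j}(u)=\frac{\sum_{j,k}(-1)^j u^{b_{jk}}}{(1-u^{a_1})\cdots(1-u^{a_n})},$$
which is the asserted rational form with $Q_B(u)=\sum_{j,k}(-1)^j u^{b_{jk}}\in\bz[u]$.

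Next, for (2) I would analyse the coefficients of this rational function. Every pole of $P_B$ is a zero of $D(u):=\prod_i(1-u^{a_i})$, hence a root of unity of order dividing some $a_i$; setting $h=\mathrm{lcm}(a_1,\ldots,a_n)$, every such $\zeta$ satisfies $\zeta^h=1$. In the partial-fraction expansion each summand $c/(1-\zeta^{-1}u)^m$ contributes to the coefficient of $u^N$ the term $c\binom{N+m-1}{m-1}\zeta^{-N}$, a polynomial in $N$ times a function periodic of period dividing $h$. Specialising $N=lh+i$ makes $\zeta^{-N}=\zeta^{-i}$ constant in $l$, so on each residue class the total becomes a polynomial $\Phi_{H_B,i}(l)$; this is exactly the claimed quasi-polynomial behaviour. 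The index of regularity is then finite for the reason explained next.

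Finally, for (3) I would make the discrepancy explicit by Euclidean division: write $Q_B(u)=D(u)g(u)+r(u)$ with $\deg r<\deg D$, so that
$$P_B(u)=g(u)+\frac{r(u)}{D(u)}.$$
The second term is a \emph{proper} rational function whose poles are roots of unity, so its power-series coefficients are reproduced by the quasi-polynomial $\Phi_{H_B,\bullet}$ for \emph{every} $l\ge 0$ with no exceptional terms. Hence $H_B(l)-\Phi_{H_B,i}(l)$ (with $i\equiv l \bmod h$) equals the coefficient of $u^l$ in the polynomial $g(u)$, which vanishes for $l>\deg g$ and equals the nonzero leading coefficient of $g$ for $l=\deg g$. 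Therefore the index of regularity equals $\deg g=\deg Q_B-\deg D=\deg Q_B-(a_1+\cdots+a_n)$, the degree of the rational fraction $P_B$. I expect the delicate point to be precisely this last step: one must verify that the proper part $r/D$ carries \emph{no} exceptional low-degree corrections (this is what forces the entire discrepancy into $g$), and that the leading coefficient of $g$ is genuinely nonzero so that $\deg g$ is attained. Items (1) and the quasi-polynomial shape in (2) are standard Hilbert--Serre theory; the real content is the clean identification in (3).
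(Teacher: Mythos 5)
The first thing to say is that the paper contains no proof of this theorem to compare against: it is stated with the words ``We recall the following Theorem from \cite{Mo1}'' and the argument lives in that reference. So your proposal has to stand on its own merits. On its merits, it is the standard Hilbert--Serre route and it is correct in the main case: part (1) by a finite graded free resolution (valid in the weighted-graded setting) plus additivity of the Hilbert--Poincar\'e series; part (2) by partial fractions over roots of unity of order dividing $h=\mathrm{lcm}(a_1,\ldots,a_n)$; and part (3) by the division $Q_B=Dg+r$, $P_B=g+r/D$, together with the key observation that a \emph{proper} rational function with root-of-unity poles has its coefficients given by its quasi-polynomial for \emph{every} $N\ge 0$, so the entire discrepancy $H_B-\Phi$ is carried by the polynomial $g$. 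That observation is exactly right, and it discharges the ``delicate point'' you flagged: if $g\neq 0$ its leading coefficient is nonzero by definition, so the last disagreement occurs at $l=\deg g=\deg Q_B-\deg D$.

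There is, however, one genuine gap: the case $\deg Q_B<\deg D$, i.e.\ $g=0$, where the degree of the rational fraction is negative. Your argument then only shows $H_B(l)=\Phi_{H_B,i}(l)$ for all $l\ge 0$ and concludes nothing, because the index of regularity --- the biggest \emph{integer} $l$ with $H_B(l)\neq\Phi_{H_B,i}(l)$ --- is then attained at a negative $l$, where $H_B(l)=\dim_K B_l=0$ but the quasi-polynomial need not vanish. To finish, one must show that the largest $l<0$ with $\Phi(l)\neq 0$ is exactly $\deg Q_B-\deg D$; this follows from the reciprocity for rational generating functions, $\sum_{m\ge 1}\Phi(-m)u^m=-P_B(1/u)$, since $P_B(1/u)=u^{\deg D-\deg Q_B}\cdot(\text{unit at } u=0)$, so the order of vanishing at $u=0$ is precisely $\deg D-\deg Q_B$. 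This degenerate case is not vacuous for the paper's application: for $a_1=1$ one gets $A=K[t]$, $P_A(u)=1/(1-u)$ of degree $-1$, matching the Frobenius number $g(S)=-1$. (A cosmetic point chargeable to the statement rather than to you: the $\Phi_{H_B,i}$ have in general only rational, integer-valued coefficients --- already $H_R(l)=\binom{l+2}{2}$ for the standard grading of $K[x_1,x_2,x_3]$ --- so ``integer coefficients'' in part (2) should be read as ``integer values''.)
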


\begin{corollary}\cite{Mo0} Let $  S$  be the semigroup generated by $a_1,\ldots ,a_n$, and $A=K[t^{a_1}, \ldots , t^{a_n}]\subset K[t]$.
 The Frobenius number $g(S) $ coincides with 
the  degree of the rational fraction defining the Poincaré series $P_A(u)$ by the theorem \ref{frob-degpoincare}.

\begin{proof} The Hilbert function of $A$ is given by
$$H_A(l)=\begin{cases} 1 &\mbox{if } l\in S\\
0 & \mbox{if } l\notin S. \end{cases}$$
In particular if $a_1,\ldots ,a_n$ are relatively prime, $H_A(l)=1$ for $l$ large enough, and the Frobenius number coincides with 
the index of regularity of the Hilbert function $H_A(l)$, so it is the degree of the rational fraction defining the Poincaré series $P_A(u)$.
\end{proof}
\end{corollary}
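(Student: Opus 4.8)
The plan is to compute the Hilbert function of $A$ explicitly and then to recognize the Frobenius number as its index of regularity, at which point part (3) of Theorem~\ref{frob-degpoincare} delivers the conclusion. First I would observe that, under the grading $\deg x_i = a_i$, the degree-$l$ graded component $A_l$ is the $K$-span of the monomials $t^k$ in $A$ with $k = l$; since $A = K[t^k \mid k \in S]$, this component is one-dimensional (spanned by $t^l$) exactly when $l \in S$ and is zero otherwise. Hence
$$H_A(l) = \begin{cases} 1 & \text{if } l \in S,\\ 0 & \text{if } l \notin S. \end{cases}$$

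Next I would reduce to the relatively prime case, which is harmless: if $\gcd(a_1,\ldots,a_n) = \lambda$ then every element of $S$ is divisible by $\lambda$, so the residue classes $\not\equiv 0 \bmod \lambda$ contribute $H_A \equiv 0$ (matching their eventual quasi-polynomial identically), while the class $\equiv 0 \bmod \lambda$ reproduces, after rescaling by $\lambda$, the behaviour of the coprime semigroup $\widetilde S$, and the largest deviation there occurs at $\lambda\, g(\widetilde S) = g(S)$. So assume $\gcd(a_1,\ldots,a_n)=1$. By the first theorem of the paper, every sufficiently large integer lies in $S$, so $H_A(l) = 1$ for $l$ large; that is, the eventual quasi-polynomial of part (2) of Theorem~\ref{frob-degpoincare} is the constant $1$ (all the $\Phi_{H_A,i}$ equal $1$). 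Consequently the index of regularity of $H_A$, namely the largest $l$ at which $H_A(l)$ differs from this eventual value, is precisely the largest integer not in $S$, which by definition is the Frobenius number $g(S)$.

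Finally I would invoke part (3) of Theorem~\ref{frob-degpoincare}, which asserts that the index of regularity of the Hilbert function equals the degree of the rational fraction defining the Poincar\'e series; combined with the previous step this identifies $g(S)$ with the degree of $P_A(u)$, as claimed. The only delicate point is the matching of the combinatorial ``largest gap'' meaning of $g(S)$ with the algebraic index of regularity of $H_A$, together with the bookkeeping of the several residue classes mod $\lambda$ in the non-coprime case. Once the Hilbert function is written out as above, however, both are transparent, and no serious obstacle remains, since the genuine work (rationality of $P_A$ and the identity between index of regularity and degree) is already supplied by Theorem~\ref{frob-degpoincare}.
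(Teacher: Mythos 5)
Your proposal is correct and follows essentially the same route as the paper: compute $H_A(l)$ as the indicator function of $S$, note that in the coprime case $H_A(l)=1$ for large $l$ so that the index of regularity is exactly $g(S)$, and then invoke part (3) of Theorem~\ref{frob-degpoincare}. The only difference is that you spell out the non-coprime case via residue classes mod $\lambda$, which the paper leaves implicit; this is a harmless (and slightly more careful) elaboration, not a different argument.
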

\section{\gbb}
Let    $  a_1 ,\ldots ,  a_{n}  $ be natural numbers,  $\lambda =\gcd (a_1 ,\ldots ,  a_{n})$. We denote by $S$ (resp. $\widetilde S$) the semigroup generated
by $a_1 ,\ldots ,  a_{n}$ (resp. by $a_1/\lambda  ,\ldots ,  a_{n}/\lambda $). Note that the semigroup rings $K[S], K[\widetilde S]$ are isomorphic.
\hfill\break 
Let $R:=K[x_{1},\ldots,x_{n}]$ be the polynomial ring graded by the weights  $\deg x_1=a_1,\ldots ,\deg x_n=a_n$.  
We consider $\prec _{degrevlex}$  the degree reverse lexicographical order with $x_n,\ldots ,x_1$, 
but all results are valuable for any monomial order such that the variable $x_1$ never appears in a minimal system of generators of the initial ideal $in(I(S))$.
The first statement of the following theorem is an extension to the quasi-homogeneous case of \cite{LJ}.
\begin{theorem}\label{noether} Let  $A:=K[t^{a_1}, \ldots , t^{a_n}]\simeq R/I(S)$.
\begin{enumerate}
 \item The polynomial ring $K[x_{1}]\subset A$ is a Noether normalization. Moreover let $G(S)$ be a \gbb for 
$\prec _{revlex}$ and   $in(I(S))$ be the initial ideal then
 $$A\simeq \oplus_{x_2^{k_2}\ldots x_n^{k_n}\notin in(I(S))} K[t^{a_1}][t^{k_2a_2+\ldots +k_na_n }].$$ 
 \item  The Hilbert-Poincaré series is given by: $$P_A(t)=\frac{\sum_{x_2^{k_2}\ldots x_n^{k_n}\notin in(I(S))}t^{k_2a_2+\ldots +k_na_n }}{1-t^{a_1}} $$
 \item The Frobenius number $g(\widetilde S)=\frac{\max { k_2a_2+\ldots +k_na_n |  x_2^{k_2}\ldots x_n^{k_n}\notin in(I(S))}-{a_1}}{\lambda }$.
\end{enumerate}\end{theorem}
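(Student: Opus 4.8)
The plan is to establish the three assertions in order, the first being the essential one; parts (2) and (3) will then be formal consequences of the module decomposition in (1). For the Noether normalization I would argue directly, without Gr\"obner bases: each generator $t^{a_j}$ of $A$ satisfies the monic equation $(t^{a_j})^{a_1}-(t^{a_1})^{a_j}=0$ over $K[t^{a_1}]=K[x_1]$, so $A$ is a finitely generated integral, hence finite, extension of the one-dimensional polynomial ring $K[x_1]$; as $\dim A=1$ this exhibits $K[x_1]$ as a Noether normalization.

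The core of the theorem is the claim that $x_1$ divides no element of the minimal monomial generating set of $in(I(S))$. I would use that $I(S)$ is generated by toric binomials $x^u-x^v$ with $\sum_i u_ia_i=\sum_i v_ia_i$, so that in the weighted degree reverse lexicographic order both terms have the same degree and the leading term is decided by the reverse-lexicographic tie-break; since $x_1$ is the smallest variable, this tie-break selects the term with the smaller exponent on $x_1$. Arguing by contradiction, suppose $x^u=in(x^u-x^v)$ is a minimal generator with $u_1\ge 1$. The tie-break forces $u_1\le v_1$, hence $v_1\ge 1$ as well, and one may factor out $x_1$ to get $x^{u}-x^{v}=x_1\,(x^{u-e_1}-x^{v-e_1})$ with $x^{u-e_1}-x^{v-e_1}\in I(S)$. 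The main point to check is that deleting one $x_1$ from both monomials does not change the comparison, so $x^{u-e_1}$ is again the leading term; then $x^{u-e_1}\in in(I(S))$ properly divides $x^u$, contradicting minimality. Therefore $u_1=0$. This factoring/tie-break interplay is the step I expect to require the most care.

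With $x_1$ absent from its minimal generators, $in(I(S))$ is extended from the monomial ideal $J=in(I(S))\cap K[x_2,\ldots,x_n]$, so the standard monomials of $A$ are precisely the products $x_1^{k_1}\,m$ with $k_1\ge 0$ and $m=x_2^{k_2}\cdots x_n^{k_n}\notin in(I(S))$. Since the standard monomials form a $K$-basis of $A$ and $\varphi(x_1^{k_1}m)=t^{k_1a_1}\cdot t^{k_2a_2+\cdots+k_na_n}$, collecting the basis elements according to their $x_2,\ldots,x_n$-part gives the stated isomorphism of $K[t^{a_1}]$-modules $A\simeq\bigoplus_{x_2^{k_2}\cdots x_n^{k_n}\notin in(I(S))}K[t^{a_1}]\,t^{k_2a_2+\cdots+k_na_n}$, proving (1).

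Part (2) is then immediate: the Hilbert--Poincar\'e series of the summand indexed by $m$ is $t^{k_2a_2+\cdots+k_na_n}/(1-t^{a_1})$, and summing over the finitely many $m\notin in(I(S))$ yields the asserted formula. For (3) I would read off from the decomposition that every element of $S$ is written uniquely as $k_1a_1+(k_2a_2+\cdots+k_na_n)$ with $k_1\ge 0$; hence the exponents $k_2a_2+\cdots+k_na_n$ are exactly the least elements of $S$ in their residue classes modulo $a_1$, that is, the set ${\rm Ap}(S,a_1)$. Applying part (2) of Ap\'ery's theorem gives $g(S)=\max{k_2a_2+\cdots+k_na_n\mid x_2^{k_2}\cdots x_n^{k_n}\notin in(I(S))}-a_1$, and dividing by $\lambda$ via $g(S)=\lambda\,g(\widetilde S)$ yields the formula in (3).
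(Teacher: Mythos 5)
Your proof is correct, and it reaches the theorem by a route that differs from the paper's at the two substantive points. For part (1), after the same integrality argument, the paper proceeds homologically: both rings are one-dimensional Cohen--Macaulay, so the Auslander--Buchsbaum formula makes $A$ a free $K[x_1]$-module, and Nakayama's lemma plus Macaulay's theorem then identify a basis; the pivotal combinatorial fact that $x_1$ divides no minimal generator of $in(I(S))$ is merely asserted (``by definition of $\prec_{degrevlex}$''). You do the opposite: you actually prove that fact (binomial generators, the revlex tie-break minimizing the $x_1$-exponent, factoring out $x_1$ against minimality), and then Macaulay's theorem alone gives freeness and the decomposition, with no Cohen--Macaulayness, Auslander--Buchsbaum, or Nakayama. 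Your version therefore supplies the one step the paper leaves unjustified, and it also makes explicit that the order must be degrevlex for the \emph{weighted} degree: for the standard degree the fact is false (for $S=\langle 2,3\rangle$ one has $in(x_2^2-x_1^3)=x_1^3$), and the paper's own $n=2$ example is consistent only with the weighted reading. The single point you leave implicit is that every minimal generator of $in(I(S))$ is the initial term of a \emph{binomial} in $I(S)$; this is the standard fact that a toric ideal has a binomial reduced Gr\"obner basis, or one can argue directly that each weighted-homogeneous component of $I(S)$ is spanned by differences of monomials of equal weighted degree, so the leading monomial of any element of $I(S)$ is the leading monomial of such a difference. For part (3) the paper invokes its Theorem \ref{frob-degpoincare} (the Frobenius number is the degree of the Hilbert--Poincar\'e series), whereas you identify the exponents $k_2a_2+\cdots+k_na_n$ with the Ap\'ery set ${\rm Ap}(S,a_1)$---which is exactly the paper's subsequent Corollary \ref{ap1}, proved there by the same uniqueness argument---and then apply Ap\'ery's theorem from Section 1 together with $g(S)=\lambda g(\widetilde S)$. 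Both routes are sound and non-circular; the paper's keeps the statement tied to its Hilbert-series framework, while yours is more elementary and establishes Corollary \ref{ap1} along the way.
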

\begin{proof}
\begin{enumerate}
 \item 
We have that for any $i=2,\ldots ,n$,  $(t^{a_i})^{a_1}-(t^{a_1})^{a_i}=0$, so  $K[t^{a_1}, \ldots , t^{a_n}]$ is integral over $K[t^{a_1}]$, 
both rings have dimension one so  $K[t^{a_1}]\subset K[t^{a_1}, \ldots , t^{a_n}]$ is a Noether normalization, 
also both rings are Cohen-Macaulay. By the
 Auslander-Buschsbaum formula we get that $ K[t^{a_1}, \ldots , t^{a_n}]$ is a free $K[t^{a_1}]$-module. This is the same to say that $R/I(S)$ is a free 
 $K[x_{1}]$-module. Since $R/I(S)$ is a graded  $K[x_{1}]$-module, we can use Nakayama's lemma, hence any $K-$basis of $R/(I(S),x_1)$ gives us a basis 
 of $R/I(S)$ as a free 
 $K[x_{1}]$-module. Let $G(S)$ be a \gbb for 
$\prec _{revlex}$ and   $in(I(S))$ be the initial ideal, by definition of $\prec _{degrevlex}$, $x_1$ does not divides any of the elements in $in(I(S))$.
 On the other hand Macaulay's theorem \cite{Ei}[Theorem 15.3]says us that the set of monomials not in $ in(I(S))$ is a basis of $R/I(S)$ as a free 
 $K[x_{1}]$-module.
 
 \item It is clear  that the Hilbert-Poincaré series of $K[t^{a_1}]$ is  $\frac{1}{1-t^{a_1}} $,
   the Hilbert-Poincaré series is an additive function, hence we have the formula for the Hilbert-Poincaré series of $A$.
  \item By \ref{frob-degpoincare} The Frobenius number of $S$ is the degree of the Hilbert-Poincaré series of $A$. 
\end{enumerate}
 \end{proof}

\end{definition}
 We have the following consequence which will be important for our algorithm:
 
 \begin{corollary}\label{ap1}
We have that \begin{enumerate}
 \item ${\rm Ap}(S,a_1)=\{ k_2a_2+\ldots +k_na_n |  x_2^{k_2}\ldots x_n^{k_n}\notin in(I(S))\}$. In particular
$$ \card\{x_2^{k_2}\ldots x_n^{k_n}\notin in(I(S))\}=\frac{a_1}{\gcd(a_1,\ldots ,a_n)}.$$
  
 \item Let $s\in {\rm Ap}(S,a_1)$, such that $s=k_2a_2+\ldots +k_na_n $ and $  x_2^{k_2}\ldots x_n^{k_n}\notin in(I(S))$. Suppose that 
$s=l_2a_2+\ldots +l_na_n $ for some natural numbers $l_2,\ldots ,l_n$, then $x_2^{k_2}\ldots x_n^{k_n}\prec _{revlex} x_2^{l_2}\ldots x_n^{l_n} $.
 \end{enumerate}
 \end{corollary}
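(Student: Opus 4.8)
The plan is to read both statements directly off the free $K[t^{a_1}]$-module decomposition of $A$ provided by Theorem \ref{noether}(1), exploiting the uniqueness inherent in that internal direct sum to translate the Ap\'ery condition $s-a_1\notin S$ into the language of standard monomials.

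For the set equality in part (1), I would fix a standard monomial $m=x_2^{k_2}\cdots x_n^{k_n}$ (that is, $m\notin in(I(S))$) and put $s_m=k_2a_2+\cdots +k_na_n$, so that its summand in Theorem \ref{noether}(1) is $t^{s_m}K[t^{a_1}]=\bigoplus_{j\geq 0}K\,t^{s_m+ja_1}$. First I would check $s_m\in {\rm Ap}(S,a_1)$: clearly $s_m\in S$, and if $s_m-a_1\in S$ then expanding $t^{s_m-a_1}$ in the decomposition gives $s_m-a_1=\deg m'+ja_1$ for some standard $m'$ and some $j\geq 0$, hence $s_m=\deg m'+(j+1)a_1$ with $j+1\geq 1$; comparing this with $s_m=\deg m+0\cdot a_1$ and invoking the directness of the sum produces a contradiction, so $s_m-a_1\notin S$. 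Conversely, given $s\in {\rm Ap}(S,a_1)$ I would expand $t^s$, write $s=\deg m+ja_1$ with $m$ standard, and note that $j\geq 1$ would force $s-a_1=\deg m+(j-1)a_1\in S$; hence $j=0$ and $s=\deg m$. This yields $\{k_2a_2+\cdots+k_na_n\mid x_2^{k_2}\cdots x_n^{k_n}\notin in(I(S))\}={\rm Ap}(S,a_1)$.

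For the cardinality assertion I would observe that directness forces distinct standard monomials to carry distinct degrees, since two standard monomials of equal degree would make the same submodule $t^{\deg m}K[t^{a_1}]$ appear twice in the sum; thus $m\mapsto \deg m$ is a bijection from the standard monomials onto ${\rm Ap}(S,a_1)$, and the desired count $a_1/\gcd(a_1,\ldots,a_n)$ is simply $\card {\rm Ap}(S,a_1)$ as given by the Ap\'ery theorem of Section 1. For part (2), I would set $u=x_2^{k_2}\cdots x_n^{k_n}$ (standard) and $v=x_2^{l_2}\cdots x_n^{l_n}$; since $\varphi(u)=t^s=\varphi(v)$, the binomial $u-v$ lies in $I(S)$, so, assuming $u\neq v$, its $\prec_{revlex}$-initial term, which is the larger of $u$ and $v$, belongs to $in(I(S))$. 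Because $u\notin in(I(S))$ this larger monomial cannot be $u$, and therefore $u\prec_{revlex}v$.

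The only delicate point is the bookkeeping in part (1): I expect the main obstacle to be stating the uniqueness-of-representation argument cleanly, namely making explicit that the internal direct sum of Theorem \ref{noether}(1) means every element of $S$ is written \emph{uniquely} as $\deg m+ja_1$ with $m$ standard and $j\geq 0$, and that this single uniqueness statement simultaneously delivers both inclusions and the injectivity needed for the count. Once that is in place, part (2) follows at once from the defining property of the initial ideal.
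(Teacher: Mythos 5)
Your proof is correct and follows essentially the same route as the paper: both read part (1) off the free $K[t^{a_1}]$-module decomposition of Theorem \ref{noether}(1), using uniqueness of the representation $s=\deg m+ja_1$ to get both inclusions, and both derive part (2) by noting that the binomial $u-v\in I(S)$ would otherwise put the standard monomial $u$ into $in(I(S))$. Your treatment is in fact slightly more explicit than the paper's on the two points it leaves implicit, namely the injectivity of $m\mapsto\deg m$ needed for the cardinality count and the role of the binomial $u-v$ in part (2).
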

 \begin{proof}We can assume that $\gcd(a_1,\ldots ,a_n)=1$. 
\begin{enumerate}
 \item By the above theorem $K[t^{a_1}, \ldots , t^{a_n}]\simeq \oplus_{s_i\in H} K[t^{a_1}][t^{s_i }]$
 where $H=\{ k_2a_2+\ldots +k_na_n |  x_2^{k_2}\ldots x_n^{k_n}\notin in(I(S))\}$, now we prove that $H={\rm Ap}(S,a_1)$.
  Let $s\in H$ suppose that $s\notin {\rm Ap}(S,a_1)$, hence $s-a_1\in S$, by the above decomposition there exists   unique $s_i\in H, l\in\bn$ such that 
$s-a_1=s_i+la_1 $ that is $s =s_i+(l+1)a_1 $ a contradiction to the direct sum decomposition. Reciprocally, let $s\in {\rm Ap}(S,a_1)$, then there exists
 unique $s_i\in H, l\in\bn$ such that 
$s=s_i+la_1 $, if $l>0$ then $s-a_1\in S$ a contradiction , hence $s=s_i\in H$.
\item If $s=l_2a_2+\ldots +l_na_n $ with $(k_1,\ldots ,k_n)\not= (l_1,\ldots ,l_n)$ and $x_2^{l_2}\ldots x_n^{l_n} \prec _{revlex} x_2^{k_2}\ldots x_n^{k_n}$ then 
$  x_2^{k_2}\ldots x_n^{k_n}\in in(I(S))$, a contradiction.

\end{enumerate}
 \end{proof}
\begin{example}Let $n=2$, and  $a_1,a_2$ be natural numbers, $\lambda =gcd(a_1,a_2)$, we have that 
$K[t^{a_1},t^{a_2}]\simeq  K[x_1,x_2]/(x_2^ {a_1/\lambda }-x_1^{a_2/\lambda })$ it is clear that
that $x_2^ {a_1/\lambda }-x_1^{a_2/\lambda }$ is a \gbb of the ideal $(x_2^ {a_1/\lambda }-x_1^{a_2/\lambda })$ for  $\prec _{revlex}$.
 We have $in(x_2^ {a_1/\lambda }-x_1^{a_2/\lambda })=(x_2^ {a_1/\lambda })$, hence
$\displaystyle K[t^{a_1},t^{a_2}]= \simeq \oplus_{k=0}^{{a_1/\lambda }-1}K[t^{a_1}][t^{ka_2}]$, the 
Poincaré series is given by: $\displaystyle P_A(t)=\frac{\sum_{k=0}^{{a_1/\lambda }-1}t^{ka_2 }}{1-t^{a_1}} $. if $a_1,a_2$ are coprime then 
 the Frobenius number is $(a_1-1)a_2-a_1=(a_1-1)(a_2-1)-1.$
\end{example}
\section{Frobenius number, Hilbert-Poincaré series, the case $n=3$}
This section is a short version of  \cite{Mo1} and \cite{Mo2}.

Let consider three natural numbers $a,b,c$ and $S$ be the semigroup generated by $a,b,c$.\hfill\break 
First, remark that any solution $\alpha :=(u,v,w)$ of the Diophantine equation $ ua+vb+wc=0$
 gives rise to a binomial in the ideal $I(S)$ in the following way:\hfill\break  we write the vector $ \alpha=\alpha_+- \alpha_-$, where the components of both 
$ \alpha_+,\alpha_-$ are nonnegative then $\ul {\bf x}^{\alpha_+}- \ul {\bf x}^{\alpha_-}\in I(S)$, where 
$\ul {\bf x}^{\alpha_+}=x_1^{{\alpha_+}_1}x_2^{{\alpha_+}_2}  x_3^{{\alpha_+}_3} $. Reciprocally if 
$\ul {\bf x}^{\alpha}- \ul {\bf x}^{\beta }\in I(S)$  and $\ul {\bf x}^{\alpha},\ul {\bf x}^{\beta }$ have not common 
factors then $(u,v,w):=\alpha -\beta $ is a
solution of the equation $ ua+vb+wc=0$.\hfill\break 
Second, it is clear that find solutions  $(u,v,w)$  of the Diophantine equation $ ua+vb+wc=0$ is equivalent 
to find solutions $(s,p,r)$ of  the Diophantine equation $sb-pc=ra$.\hfill\break 
Let $s_0$ be the smallest natural number such that $(s_0,0,r_0)$ is solution of the equation $sb-pc=ra$. We set $p_0=0$.\hfill\break 
Let $p_1$ be the smallest natural number such that $(s_1,p_1,r_1)$ is solution of the equation $sb-pc=ra$, with $0\leq s_1<s_0$. Note that
 $s_0=\frac{a}{\gcd(a,b)}$ and
$p_1=\frac{\gcd(a,b)}{\gcd(a,b,c)}$. The numbers 
$s_1$ can be got from the extended Euclide's algorithm for the computation of the greatest common divisor of $a,b$. 

Let consider the Euclides' algorithm with negative rest: 
$$ \left \{
  \begin{array}{r c l}
      s_{0} & = & q_2s_1-s_2 \\
      s_1 & = & q_3s_2-s_3 \\
      \ldots  & = & \ldots \\
      s_{m-1} & = & q_{m+1}s_{m} \\
      s_{m+1} & = & 0 
   \end{array}
   \right .$$
  $ q_i\ge 2 \ ,\ s_i\ge 0\ \   \forall i$.
 
Let define the sequences: 
$ p_i, r_i$ $(0\leq i\leq m+1)~,$  
 by:
$$  p_{i+1}=p_iq_{i+1}-p_{i-1}~,  r_{i+1}=r_iq_{i+1}-r_{i-1}~, (1\leq i\leq m) .$$
Note that from \cite{Mo2} we have for $i=0,\ldots ,m$ that $s_i p_{i+1}- s_{i+1}p_i=s_0p_1=\frac{a}{\gcd(a,b,c)}$.
Let $ \mu$ 
 the unique integer such that   $ r_{\mu }> 0\geq r_{\mu+1}$.
 \begin{theorem} (\cite{Mo0}, \cite{Mo1} and \cite{Mo2})
 The set $$x_2^{s_\mu }-x_1^{r_\mu }x_3^{p_\mu }, x_3^{p_{\mu +1}}-x_1^{-r_{\mu +1}}x_2^{s_{\mu +1}},
 x_2^{s_\mu -s_{\mu +1}}x_3^{p_{\mu +1}- p_\mu }-x_1^{r_\mu -r_{\mu +1}}$$
 is a \gbb of $I(S)$ for $\prec _{revlex}$ with $x_3,x_2,x_1$.
 In particular $in(I(S)=(x_2^{s_\mu }, x_3^{p_{\mu +1}},x_2^{s_\mu -s_{\mu +1}}x_3^{p_{\mu +1}})$, and 
 $$\bn^2\setminus exp(in(I(S))=\{ (k,l)\in \bn^2 | 0\leq k<s_\mu -s_{\mu +1}, 0\leq l<p_{\mu +1} \}\cup $$
$$\{ (k,l)\in \bn^2 | s_\mu -s_{\mu +1}\leq k<s_\mu , 0\leq l<p_{\mu +1}- p_{\mu}\}.$$

$$K[t^{a},t^b,t^c]\simeq \oplus_{(k,l) \in \bn^2\setminus exp(in(I(S))} K[t^{a}][t^{kb+lc }].$$ 
 In particular the Poincaré series is given by: $$P_A(t)=\frac{\sum_{(k,l) \in \bn^2\setminus exp(in(I(S))}t^{kb+lc }}{1-t^{a}} $$
 and if the numbers $a,b,c$ are relatively prime the  Frobenius number is $$g(S)=\max { kb+lc |  (k,l)\in  \bn^2\setminus exp(in(I(S))}-{a_1}.$$
\begin{proof}we can give a new and shorter proof than the one given in the general case in \cite{Mo2}.
We can assume that the numbers $a,b,c$ are relatively prime. Let consider the three elements of $I(S)$: $x_2^{s_\mu }-x_1^{r_\mu }x_3^{p_\mu }, x_3^{p_{\mu +1}}-x_1^{-r_{\mu +1}}x_2^{s_{\mu +1}},
 x_2^{s_\mu -s_{\mu +1}}x_3^{p_{\mu +1}- p_\mu }-x_1^{r_\mu -r_{\mu +1}}$. 
It then follows that $J:=(x_2^{s_\mu }, x_3^{p_{\mu +1}},x_2^{s_\mu -s_{\mu +1}}x_3^{p_{\mu +1}})\subset in(I(S)$. 
Now we count the numbers of monomial not in $J$, $$\card (\bn^2\setminus exp(J)) =(s_\mu -s_{\mu +1})p_{\mu +1}+s_{\mu +1}(p_{\mu +1}-p_\mu )=
s_\mu p_{\mu +1}-s_{\mu +1}p_\mu =a.$$
On the other hand by Corollary \ref{ap1}, $\card (\bn^2\setminus exp(in(I(S)))=a$ this implies $in(I(S)=J$, hence the set 
$x_2^{s_\mu }-x_1^{r_\mu }x_3^{p_\mu }, x_3^{p_{\mu +1}}-x_1^{-r_{\mu +1}}x_2^{s_{\mu +1}},
 x_2^{s_\mu -s_{\mu +1}}x_3^{p_{\mu +1}- p_\mu }-x_1^{r_\mu -r_{\mu +1}}$
 is a \gbb of $I(S)$ for $\prec _{revlex}$ with $x_3,x_2,x_1$. The other claims follows from the Theorem \ref{noether}
 \end{proof}
\end{theorem}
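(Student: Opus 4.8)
The plan is to exhibit the three displayed binomials as explicit elements of $I(S)$, read off their leading terms to obtain a monomial ideal $J\subseteq\ini{I(S)}$, and then upgrade this containment to an equality by an a priori dimension count rather than by checking S-polynomials via Buchberger's criterion.

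First I would verify that each triple $(s_i,p_i,r_i)$ solves the Diophantine equation $sb-pc=ra$. The key observation is that all three sequences obey the \emph{same} second order linear recurrence: Euclid's algorithm with negative rest gives $s_{i+1}=q_{i+1}s_i-s_{i-1}$, while by definition $p_{i+1}=q_{i+1}p_i-p_{i-1}$ and $r_{i+1}=q_{i+1}r_i-r_{i-1}$. Hence the scalar $s_ib-p_ic-r_ia$ satisfies this recurrence too, and since it vanishes for $i=0$ and $i=1$ (where $(s_0,0,r_0)$ and $(s_1,p_1,r_1)$ are solutions by construction), it vanishes for all $i$. Translating the solutions $(s_\mu,p_\mu,r_\mu)$, $(s_{\mu+1},p_{\mu+1},r_{\mu+1})$ and their difference back into monomials — and using $r_\mu>0\geq r_{\mu+1}$ together with the monotonicity of the $s_i$ (decreasing) and $p_i$ (increasing) to guarantee nonnegative exponents — produces exactly the three binomials, all lying in $I(S)$.

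Next I would identify the leading terms for $\prec_{revlex}$ with $x_3,x_2,x_1$. Since the grading $\deg x_i=a_i$ makes each binomial homogeneous, the two monomials in each binomial share the same weighted degree, so the order is decided by the reverse-lexicographic tie-break, in which the monomial carrying the \emph{smaller} power of the last variable $x_1$ is the larger. In all three binomials the pure $x_2,x_3$-monomial has $x_1$-exponent zero while its partner has a nonnegative power of $x_1$ (strictly positive in the first and third, and in the second case $-r_{\mu+1}$, the tie $r_{\mu+1}=0$ being resolved by the further revlex comparison in favor of $x_3^{p_{\mu+1}}$). Thus the leading terms are $x_2^{s_\mu}$, $x_3^{p_{\mu+1}}$ and $x_2^{s_\mu-s_{\mu+1}}x_3^{p_{\mu+1}-p_\mu}$, and $J:=(x_2^{s_\mu},x_3^{p_{\mu+1}},x_2^{s_\mu-s_{\mu+1}}x_3^{p_{\mu+1}-p_\mu})\subseteq\ini{I(S)}$.

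Finally I would count standard monomials. The exponents $(k,l)$ of $x_2^kx_3^l$ avoiding $J$ form an L-shaped staircase, which splits into the two rectangles displayed in the statement, giving
$$\card(\bn^2\setminus exp(J))=(s_\mu-s_{\mu+1})p_{\mu+1}+s_{\mu+1}(p_{\mu+1}-p_\mu)=s_\mu p_{\mu+1}-s_{\mu+1}p_\mu,$$
and the determinant identity $s_ip_{i+1}-s_{i+1}p_i=s_0p_1=a$ (recalled before the theorem, in the coprime case) collapses this to $a$. On the other hand, Theorem \ref{noether} guarantees that $x_1$ divides no minimal generator of $\ini{I(S)}$, so $\ini{I(S)}$ and $J$ may be compared as monomial ideals in the $x_2,x_3$-plane, and Corollary \ref{ap1} tells us $\ini{I(S)}$ has exactly $a$ standard monomials there. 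The inclusion $J\subseteq\ini{I(S)}$ forces the standard set of $\ini{I(S)}$ into that of $J$; both having cardinality $a$, they coincide, whence $J=\ini{I(S)}$. Since the three binomials lie in $I(S)$ and their leading terms generate $\ini{I(S)}$, they form a \gbb, and the staircase description, the module decomposition and the Frobenius formula then follow directly from Theorem \ref{noether}. The one genuinely non-formal step is recognizing that the a priori count $\card=a$ supplied by Corollary \ref{ap1} is precisely what promotes the easy inclusion $J\subseteq\ini{I(S)}$ to an equality, sidestepping any S-polynomial computation; everything else is bookkeeping on the recurrences.
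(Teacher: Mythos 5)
Your proposal is correct and follows essentially the same route as the paper: exhibit the three binomials in $I(S)$, deduce the containment $J\subseteq\ini{I(S)}$, count the $L$-shaped staircase to get $s_\mu p_{\mu+1}-s_{\mu+1}p_\mu=a$ standard monomials, and invoke Corollary \ref{ap1} to force equality, with the remaining claims delegated to Theorem \ref{noether}. The only difference is that you fill in details the paper leaves implicit (the recurrence argument showing the binomials lie in $I(S)$, and the revlex identification of leading terms, including the tie case $r_{\mu+1}=0$), which is a welcome but not substantively different elaboration.
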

\section{Algorithm for the case $n\geq 4$}
For $n=3$, we have seen that the  algorithm use only Euclide's algorithm. Let $n \geq 4$.
Let    $  a_1 ,\ldots ,  a_{n}  $ be relatively prime natural numbers,  $S$  the semigroup generated
by $a_1 ,\ldots ,  a_{n}$.
Let $R:=K[x_{1},\ldots,x_{n}]$ be the polynomial ring graded by the weights  $\deg x_1=a_1,\ldots ,\deg x_n=a_n$.  
We consider $\prec _{degrevlex}$  the degree reverse lexicographical order with $x_n,\ldots ,x_1$, $A=K[t^{a_1}, \ldots , t^{a_n}]\simeq R/I(S).$

\begin{remark} 
If $x_2^{k_2}\ldots x_n^{k_n}  \in    in(I(S)$ is part of a minimal generating system of $in(I(S)$, then all the monomials with exponents in the cube 
$[0; k_2]\times\ldots\times [0; k_n]$ are not in $ in(I(S)$ except $x_2^{k_2}\ldots x_n^{k_n}$.
that is we have $(k_2+1)\times\ldots\times (k_n+1)-1$ monomials not in $ in(I(S)$, hence $(k_2+1)\times\ldots\times (k_n+1)-1\leq a_1$ , which implies
$S_{n-1}+S_{n-2}+ \ldots+S_2+S_1\leq a_1$, where $S_i$ is the symmetric polynomial of degree $i$ in the variables ${k_2},\ldots ,{k_n} $.
 In particular ${k_2}+\ldots +{k_n}\leq  a_1$.
 \end{remark}
  We have the following algorithm for the Frobenius number:

 \begin{algorithm} Frobenius MM-DD:
 
 {\bf Input}: $a_1 ,\ldots ,  a_{n}$
 
 {\bf  Ouput}: the Apéry set of $S$ with respect to $a_1$. The Frobenius number of $S$.
 
{\bf Begin}
\begin{enumerate}
\item  $sum=1; $ test=false,testsum=false, $Ap=\{ 0\} $, $Apmod=\{ 0\} $.
\item while ($sum\leq a_1$) and (testsum=false) do
\begin{enumerate}

\item for each monomial   $x_2^{k_2}\ldots x_n^{k_n}$ with ${k_2}+\ldots+ {k_n}=sum$ 
\item if $\prod_{i=2}^n (k_i+1)\leq a_1+1$ then
\begin{enumerate}
\item compute  $qsum:={k_2}a_2+\ldots+ {k_n}a_n$ and the congruence class mod $a_1$, that is ${k_2}a_2+\ldots+ {k_n}a_n \mod a_1$
\item  If $qsum \mod a_1$ doesn't belongs to $Apmod$ then  $Ap=Ap\cup \{ qsum\} $,
$Apmod=Apmod\cup \{ qsum \mod a_1\} $.
 \item  If $qsum \mod a_1$  belongs to $Apmod$, let $h\in Ap $ such that $h=qsum \mod a_1$
\begin{enumerate}
\item test=true
\item  If  $qsum<h$ then $Ap=(Ap\setminus \{ h\}\cup \{ qsum\} $. 
 
\end{enumerate}
\end{enumerate}
 \item end if ($\prod_{i=2}^n (k_i+1)\leq a_1+1$)
\item If either $\card (Apmod)=a_1 $ or for all monomials  $x_2^{k_2}\ldots x_n^{k_n}$ with ${k_2}+\ldots+ {k_n}=sum$, we have that test=true
then testsum=true. 
\item Otherwise testsum=false. $sum=sum+1$.
\end{enumerate}
\item 
$frob+a_1=\max {p\in Ap }$.
\end{enumerate}
{\bf End} (of the algorithm).

\end{algorithm}
The knapsack integer problem: Let $b\in \bn$, in order to know if $b\in S$ we run the above algorithm, let $h\in Ap$ such that  $b=h\mod a_1$ ,then $b\in S$
 if and only if $b\geq h$.

\begin{algorithm} Frobenius NoINI MM-DD:
 
 {\bf Input}: $a_1 ,\ldots ,  a_{n}$
 
 {\bf  Ouput}: the Apéry set of $S$ with respect to $a_1$. The Frobenius number of $S$. The set of monomials
 in the variables $x_2,...,x_n$ not in the ideal $ in(I(S)$.
 $Sumnoini$ the least upper bound for the usual degree of a minimal system of monomial generators of the ideal $ in(I(S)$
 
{\bf Begin}
\begin{enumerate}
\item  $sum=1; $ test=false,testsum=false, $Ap=\{ 0\} $, $Apmod=\{ 0\}, Lnoini=\{ 1\}  $.
\item while ($sum\leq a_1$) and (testsum=false) do
\begin{enumerate}

\item for each monomial   $x_2^{k_2}\ldots x_n^{k_n}$ with ${k_2}+\ldots+ {k_n}=sum$ 
\item if $\prod_{i=2}^n (k_i+1)\leq a_1+1$ then
\begin{enumerate}
\item compute  $qsum:={k_2}a_2+\ldots+ {k_n}a_n$ and the congruence class mod $a_1$, that is ${k_2}a_2+\ldots+ {k_n}a_n \mod a_1$
\item  If $qsum \mod a_1$ doesn't belongs to $Apmod$ then  $Ap=Ap\cup \{ qsum\} $,
$Apmod=Apmod\cup \{ qsum \mod a_1\} $, $Lnoini=Lnoini=\cup \{ x_2^{k_2}\ldots x_n^{k_n}\}$.
 \item  If $qsum \mod a_1$  belongs to $Apmod$, let $h\in Ap $ such that $h=qsum \mod a_1$, and 
 $x_2^{j_2}\ldots x_n^{j_n}\in Lnoini$ which $qsum$ is $h$.
\begin{enumerate}
\item test=true
\item  If  $x_2^{k_2}\ldots x_n^{k_n}\prec _{revlex} x_2^{j_2}\ldots x_n^{j_n}$ then $Ap=(Ap\setminus \{ h\}\cup \{ qsum\} $, and
$Lnoini=(Lnoini\setminus \{ x_2^{j_2}\ldots x_n^{j_n}\}\cup \{ x_2^{k_2}\ldots x_n^{k_n}\} $. 
 
\end{enumerate}
\end{enumerate}
 \item end if ($\prod_{i=2}^n (k_i+1)\leq a_1+1$)
\item If either $\card (Apmo)=a_1 $ or for all monomials  $x_2^{k_2}\ldots x_n^{k_n}$ with ${k_2}+\ldots+ {k_n}=sum$, we have that test=true
then testsum=true. 
\item Otherwise testsum=false. $sum=sum+1$.
\end{enumerate}
\item 
$frob+a_1=\max {p\in Ap }$.
\end{enumerate}
{\bf End} (of the algorithm).

\end{algorithm}


\begin{algorithm} Frobenius INI,GB MM-DD:
 
 This algorithm needs to run previuosly the algorithme Frobenius NOINI MM-DD.
 
 {\bf Input}: $a_1 ,\ldots ,  a_{n}$, $Ap, Apmod,Lnoini  $.
 
 {\bf  Ouput}: the Apéry set of $S$ with respect to $a_1$. The Frobenius number of $S$. The set of monomials
 in the variables $x_2,...,x_n$ not in the ideal $ in(I(S)$.
 
{\bf Begin}
\begin{enumerate}
\item  $sum=1; $ test=false,testsum=false, $ Lini=\emptyset   $.
\item while ($sum\leq {\text Sumnoini}$) and (testsum=false) do
\begin{enumerate}

\item for each monomial   $x_2^{k_2}\ldots x_n^{k_n}$ with ${k_2}+\ldots+ {k_n}=sum$ 
\item if $x_2^{k_2}\ldots x_n^{k_n}$ belongs to the ideal generated by $Lini$ then $testini=true.$
\item If $testini=false$ then
\item if $\prod_{i=2}^n (k_i+1)\leq a_1+1$ then

\item compute  $qsum(\underline {\bf k}):={k_2}a_2+\ldots+ {k_n}a_n$ and the congruence class mod $a_1$, that is ${k_2}a_2+\ldots+ {k_n}a_n \mod a_1$
\begin{enumerate}
\item Test if $x_2^{k_2}\ldots x_n^{k_n}$ is a generator of$ in(I)$:\hfill\break 
 let $x_2^{j_2}\ldots x_n^{j_n}\in Lnoini$ such that its $qsum(\underline {\bf k})=qsum(\underline {\bf j}) \mod a_1$.
 If $\underline {\bf j}\not= \underline {\bf k}$ and $\underline {\bf j}\not, \underline {\bf k}$ have disjoint supports 
 then $Lini=Lini\cup \{ x_2^{k_2}\ldots x_n^{k_n}\} $

\item test=true
 
\end{enumerate}
 \item end if ($\prod_{i=2}^n (k_i+1)\leq a_1+1$)
  \item end if ($testini=false$)
\item If either $\card (Apmo)=a_1 $ or for all monomials  $x_2^{k_2}\ldots x_n^{k_n}$ with ${k_2}+\ldots+ {k_n}=sum$, we have that test=true
then testsum=true. 
\item Otherwise testsum=false. $sum=sum+1$.
\end{enumerate}
\item 
$frob+a_1=\max {p\in Ap }$.
\end{enumerate}
{\bf End} (of the algorithm).

\end{algorithm}

In the following example we compute the \gbb  by my software and by Cocoa, it runs in above 7 seconds in both softwares. The \gbb has 571 generators.
 
\begin{example}$(a_1=1030 ,  a_2= 1031, a_3= , 1034,a_4=1039,a_5=1046,a_6=1055  ,a_7=1066,a_8=1079  ,a_9=1094  ,a_{10}=1111  ,a_{11}=1130 
 ,a_{12}=1151  ,a_{13}=1373  
,a_{14}=1393  ,a_{15}=1423, a_{16}=1433, a_{17}=1493 )$.

If we compute only the Apéry number our algorithm is much faster than Cocoa. In this example the Apéry number is   5145. 
\end{example}

\end{document}